 \renewcommand{\a}{\alpha}
\renewcommand{\b}{\beta}
\newcommand{\e}{\epsilon}
\renewcommand{\d}{{\delta}}
\newcommand{\g}{\gamma}
\renewcommand{\(}{\left\(}
\renewcommand{\)}{\right\)}
\renewcommand{\[}{\left\[}
\renewcommand{\]}{\right\]}
\numberwithin{equation}{section}
 \theoremstyle{plain}
\newtheorem{theorem}{Theorem}[section]
\newtheorem{lemma}[theorem]{Lemma}
\def\proof{\@ifnextchar[{\@oproof}{\@nproof}}
\def\@oproof[#1][#2]{\trivlist\item[\hskip\labelsep\textit{#2 Proof of\
#1.}~]\ignorespaces}
\def\@nproof{\trivlist\item[\hskip\labelsep\textit{Proof.}~]\ignorespaces}
\begin{document}

\title[New representations for $\sigma(q)$]{New representations for $\sigma(q)$ via reciprocity theorems} 

\author{Koustav Banerjee}
\address{Department of Mathematics, Ramakrishna Mission Vivekananda University, PO Belur Math, Howrah,  711202, West Bengal, India} \email{banerjeekoustav98@gmail.com}

\author{Atul Dixit}
\address{Department of Mathematics, Indian Institute of Technology Gandhinagar, Palaj, Gandhinagar 382355, Gujarat, India} 
\email{adixit@iitgn.ac.in}
\dedicatory{\emph{Dedicated to our friend Krishnaswami Alladi on the occasion of his $60$th birthday}}
\maketitle

\begin{abstract}
Two new representations for Ramanujan's function $\sigma(q)$ are obtained. The proof of the first one uses the three-variable reciprocity theorem due to Soon-Yi Kang and a transformation due to R.P.~Agarwal while that of the second uses the four-variable reciprocity theorem due to George Andrews and a generalization of a recent transformation of Andrews, Schultz, Yee and the second author. The advantage of these representations is that they involve free complex parameters - one in the first representation, and two in the second.
\end{abstract}

\section{Introduction}\label{intro}
One of the celebrated functions of Ramanujan is the function $\sigma(q)$ defined by
\begin{equation*}
\sigma(q):=\sum_{n=0}^{\infty} \frac{q^{\frac{n(n+1)}{2}}}{(-q)_n}.
\end{equation*}
It is the generating function for the excess number of partitions of $n$ into distinct parts with even rank over those of odd rank \cite{andrews1986}. Note that the rank of a partition is the largest part minus the number of parts.
 
On page 14 of the Lost Notebook \cite{lnb}, Ramanujan gave two surprising identities involving $\sigma(q)$:
\begin{equation}\label{rsi1}
\sum_{n=0}^{\infty}\left(S(q)-(-q)_n\right)=S(q)D(q)+\frac{1}{2}\sigma(q),
\end{equation}
and
\begin{equation}\label{rsi2}
\sum_{n=0}^{\infty}\left(S(q)-\frac{1}{(q;q^2)_{n+1}}\right)=S(q)D(q^2)+\frac{1}{2}\sigma(q),
\end{equation}
where
\begin{align*}
S(q)&:=(-q;q)_{\infty},\nonumber\\
D(q)&=-\frac{1}{2}+\sum_{n=1}^{\infty}\frac{q^n}{1-q^n}.
\end{align*}
Here, and throughout the sequel, we assume $|q|<1$ and use the standard $q$-series notation
\begin{align*}
(A)_0 &:=(A;q)_0 =1, \qquad \\
(A)_n &:=(A;q)_n  = (1-A)(1-Aq)\cdots(1-Aq^{n-1})\hspace{2mm}\text{for any positive integer}\hspace{1mm} n, \\
(A)_{\infty} &:=(A;q)_{\infty}  = \lim_{n\to\infty}(A;q)_n, \qquad |q|<1,\\
(A)_{n}&:=(A)_{\infty}/(Aq^n)_{\infty}\hspace{2mm}\text{for any integer}\hspace{1mm} n.
\end{align*}
Since the base (or nome) of almost all of the $q$-shifted factorials occurring in our paper is $q$, for simplicity, we also use the following notation:
\begin{align*}
(A_1, A_2, \cdots, A_m)_n&:=(A_1, A_2, \cdots, A_m;q)_n=(A_1)_n(A_2)_n\cdots(A_m)_n,\\
(A_1, A_2, \cdots, A_m)_{\infty}&:=(A_1, A_2, \cdots, A_m;q)_{\infty}=(A_1)_{\infty}(A_2)_{\infty}\cdots(A_m)_{\infty}.
\end{align*}
Wherever there is a possibility of confusion, we provide the associated base.

The aforementioned identities involving $\sigma(q)$ were first proved by Andrews in \cite{andrews1986}. The function $\sigma(q)$ enjoys many nice properties relevant to various fields of number theory, namely, the theory of partitions, algebraic number theory, Maass waveforms, quantum modular forms etc. We review these properties below. 

In \cite{andrews1986}, and later more explicitly in \cite{andrewsmonthly86}, Andrews conjectured that infinitely many coefficients in the power series expansion of $\sigma(q)$ are zero but that the coefficients are unbounded. These two conjectures were later proved by Andrews, Dyson and Hickerson in a beautiful paper \cite{adh}, where they found that the coefficients of $\sigma(q)$ have multiplicative properties determined by a certain Hecke character associated to the real quadratic field $\mathbb{Q}(\sqrt{6})$. Results similar to these were later found by Bringmann and Kane \cite{bringkane1}, Corson, Favero, Liesinger and Zubairy \cite{cflz}, Lovejoy \cite{love1}, \cite{love2}, Patkowski \cite{pat0}, and more recently by Xiong \cite{xiong}.

Cohen \cite{cohen} showed that if we set 
\begin{align*}
\varphi(q)&:=q^{1/24}\sigma(q)+q^{-1/24}\sigma^{*}(q)\nonumber\\
&=\sum_{n\in\mathbb{Z}\atop n\equiv1\hspace{1mm}(\text{mod}\hspace{1mm}24)}T(n)q^{|n|/24},
\end{align*}
where
\begin{align*}
\sigma^{*}(q):=2\sum_{n=1}^{\infty}\frac{(-1)^nq^{n^2}}{(q;q^2)_n},
\end{align*}
then $T(n)$ are the coefficients of a Maass waveform of eigenvalue $1/4$. For another example of such a Maass waveform associated with the pair $(W_1(q), W_2(q))$ studied in \cite{cflz}, we refer the reader to Section $2$ of a recent paper of Li, Ngo and Rhoades \cite{lingorhoades1}. At the end of \cite{lingorhoades1}, the authors posed an open problem of relating $10$ other pairs of $q$-series to Maass waveforms or indefinite quadratic forms, which was recently solved by Krauel, Rolen and Woodbury \cite{krw}. The function $\sigma(q)$ also occurs in one of the first examples of quantum modular forms given by Zagier \cite{zagierqmf} that is, $q^{1/24}\sigma(q)$, where $q=e^{2\pi ix}, x\in\mathbb{Q}$, is a quantum modular form. 

The identities of the type \eqref{rsi1} and \eqref{rsi2} are known as `sum of tails' identities. After Ramanujan, Zagier \cite[Theorem 2]{zagiertop} was the next mathematician to discover a `sum of tails' identity. This is associated with the Dedekind eta-function and occurs in his work on Vassiliev invariants. Using a new Abel-type lemma, Andrews, Jim\'{e}nez-Urroz, and Ono \cite{ajo} obtained two general theorems involving $q$-series obtained by summing the iterated differences between an infinite product and its truncated products, and used them not only to prove \eqref{rsi1} and \eqref{rsi2} and similar other identities but also to determine the values at negative integers of certain $L$-functions. Chan \cite[p.~78]{chansears} gave a multiparameter `sum of tails' identity which consists, as special cases, the two general theorems in \cite{ajo}. More `sum of tails' identities were obtained by Andrews and Freitas \cite{af}, Bringmann and Kane \cite{bringkane2}, and Patkowski \cite{pat1}, \cite{pat2}, \cite{pat3}.

Andrews \cite{andrews1986} asked for a `near bjiection' between the weighted counts of partitions given by the left-hand sides of \eqref{rsi1} and \eqref{rsi2}, and the coefficients of the corresponding first expressions obtained by the convolutions of the associated partition functions. Such a proof was supplied by Chen and Ji \cite{chenji}. In \cite[Theorem 3.3]{ady1}, the function $\sigma(q)$ was found to be related to the generating function of the number of partitions of $n$ such that all even parts are less than or equal to twice the smallest part.

As mentioned before, identities \eqref{rsi1} and \eqref{rsi2} were proved by Andrews in \cite{andrews1986}. His proof was based on an application of a beautiful $q$-series identity of Ramanujan \cite[p.~40]{lnb}, \cite[Equation (3.8)]{gea90}, now known as Ramanujan's reciprocity theorem, which was in turn proved earlier by Andrews himself in \cite{gea90}. In \cite{af}, it was remarked that the proofs of \eqref{rsi1} and \eqref{rsi2} in \cite{andrews1986} are nearly as odd as the identities themselves. In \cite[p.~149]{ablnb2} as well, it was remarked that `the proofs provide no significant insight into the reasons for their existence'. While this may be true, the goal of this paper is to show that the underlying idea in these proofs  can be adapted to obtain new representations for $\sigma(q)$, which are of a type completely different than those previously known, for example, \cite[Equations (6.3), (6.4)]{adh} or \eqref{rsi1} and \eqref{rsi2}.

These representations result from applying Andrews' idea in \cite{andrews1986} to the three-variable reciprocity theorem of Kang \cite[Theorem 4.1]{kang} which is equivalent to Ramanujan's ${}_1\psi_{1}$ summation formula, and to the four-variable reciprocity theorem \cite[Theorem 1.2]{kang} which is equivalent to a formula of Andrews \cite[Theorem 6]{gea90}. 

For $|c|<|a|<1$ and $|c|<|b|<1$, Kang \cite[Theorem 4.1]{kang} obtained the following three-variable reciprocity theorem:
\begin{align}\label{3varrt}
\rho_{3}(a, b, c)-\rho_{3}(b, a, c)=\left(\frac{1}{b}-\frac{1}{a}\right)\frac{(c,aq/b, bq/a, q)_{\infty}}{(-c/a, -c/b, -aq, -bq)_{\infty}},
\end{align}
where 
\begin{align*}
\rho_{3}(a, b, c):=\left(1+\frac{1}{b}\right)\sum_{n=0}^{\infty}\frac{(c)_n(-1)^nq^{n(n+1)/2}a^nb^{-n}}{(-aq)_{n}(-c/b)_{n+1}}.
\end{align*}
Ramanujan's reciprocity theorem is a special case $c=0$ of the above theorem.

Using \eqref{3varrt}, we obtain the following new representation for $\sigma(q)$. The surprising thing about this representation is that it is valid for any complex $c$ such that $|c|<1$.
\begin{theorem}\label{newsigma1}
For any complex c such that $|c|<1$, we have
\begin{align}\label{idsigma1}
\sigma(q)=(-c)_{\infty}\sum_{n=0}^{\infty}\frac{q^{n(n+1)/2}}{(-q)_n(1-cq^n)}-2\sum_{m,n=0}^{\infty}\frac{(-q)_m}{(q)_m(q)_n}\frac{(-1)^nq^{n(n+1)/2}c^{m+n+1}}{(1-q^{n+m+1})}.
\end{align}
\end{theorem}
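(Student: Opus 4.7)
The plan is to carry out the analogue of Andrews' original derivation of \eqref{rsi1} from Ramanujan's reciprocity theorem, now starting from Kang's three-variable extension \eqref{3varrt}. First I would set $a=1$ in \eqref{3varrt} and examine the identity as $b$ varies near $b=-1$. At $b=-1$ the prefactor $1+1/b$ in $\rho_3(1,b,c)$ vanishes, while the other term $\rho_3(-1,1,c)$ together with the infinite-product side of \eqref{3varrt} yields only the benign consistency evaluation $\sum_{n\ge 0}(c)_n q^{n(n+1)/2}/[(q)_n(-c)_{n+1}] = (-q)_\infty/(-c)_\infty$; no new information about $\sigma(q)$ appears at this order.

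Next I would differentiate \eqref{3varrt} with respect to $b$ and evaluate at $b=-1$. The derivative of $\rho_3(1,b,c)$ collapses to the value of its inner series at $b=-1$; using $(-c/b)_{n+1}\big|_{b=-1}=(c)_{n+1}$ together with $(c)_n/(c)_{n+1}=1/(1-cq^n)$, this evaluates to $-\sum_{n\ge 0}q^{n(n+1)/2}/[(-q)_n(1-cq^n)]$, i.e.\ the negative of the first sum on the right of \eqref{idsigma1}. The derivative of $\rho_3(b,1,c)$ produces a single series weighted by the partial harmonic sum $\sum_{k=1}^n 1/(1-q^k)$, while logarithmic differentiation of the infinite product on the right of \eqref{3varrt} contributes a combination of $\sum_{k\ge 0} q^k/(1-cq^k)$ and $\sum_{k\ge 1} q^k/(1-q^k)$ multiplied by $(-q)_\infty/(-c)_\infty$. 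Multiplying through by $(-c)_\infty$ then reorganizes the derivative identity into a relation for $(-c)_\infty S_1(c)$ against a collection of sum-of-tails terms.

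The final step is to convert the partial-harmonic-sum-weighted single series into the explicit double sum
\begin{equation*}
\sum_{m,n=0}^{\infty}\frac{(-q)_m}{(q)_m(q)_n}\frac{(-1)^n q^{n(n+1)/2}c^{m+n+1}}{1-q^{n+m+1}}
\end{equation*}
appearing in \eqref{idsigma1}. This is where R.~P.~Agarwal's transformation enters, producing in particular the required factor $(-q)_m/(q)_m$ from the $q$-hypergeometric inner series. The residual pieces involving $(-q)_\infty$ then combine with Andrews' identity \eqref{rsi1} (rewritten as $\sigma(q)=(-q)_\infty[1-2\sum_{k\ge 1}q^k/(1-q^k)]+2\sum_{n\ge 0}((-q)_\infty-(-q)_n)$) to yield \eqref{idsigma1}. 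The main obstacle I anticipate is this last step: identifying the precise form of Agarwal's transformation that matches $S_2(c)$ term-by-term, and verifying that the remaining $D(q)$-type sums from \eqref{rsi1} and from the logarithmic derivatives cancel exactly, so that $\sigma(q)$ (and not $\sigma(q)$ plus a sum-of-tails correction) emerges on the left. A secondary difficulty is the careful bookkeeping of the several logarithmic derivatives of infinite products in the $b$-differentiation.
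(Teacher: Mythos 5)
Your overall skeleton is the right one and matches the paper's: specialize \eqref{3varrt} so that one side carries the vanishing prefactor $1+1/b$, check that the other side vanishes at the degenerate point (your consistency evaluation $\sum_{n\ge0}(c)_nq^{n(n+1)/2}/[(q)_n(-c)_{n+1}]=(-q)_\infty/(-c)_\infty$ is exactly the paper's verification that $f(1)=0$), and then differentiate; your identification of the first sum of \eqref{idsigma1} as the derivative of the $\rho_3(1,b,c)$ side is also correct. However, there is a genuine gap at the step you yourself flag as the main obstacle, and it is not a matter of bookkeeping: your plan differentiates $\rho_3(b,1,c)$ and the infinite product \emph{first}, producing a series weighted by partial sums $\sum_{k=1}^n q^k/(1-q^k)$ plus several logarithmic derivatives, and only then hopes to apply Agarwal's transformation \eqref{me} to the weighted series. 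But \eqref{me} transforms series of the form $\sum_n\frac{(\a)_n(\g)_n}{(\b)_n(\d)_n}t^n$; it says nothing about harmonic-sum-weighted series, and no mechanism in your outline actually produces the double sum $\sum_{m,n}\frac{(-q)_m}{(q)_m(q)_n}\frac{(-1)^nq^{n(n+1)/2}c^{m+n+1}}{1-q^{n+m+1}}$. Likewise, invoking \eqref{rsi1} to conjure $\sigma(q)$ is not needed and does not supply the double sum.

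The paper's resolution is to reverse the order of operations. Writing $\rho_3(-z,1,c)=2\sum_{n\ge0}\frac{(c)_nz^nq^{n(n+1)/2}}{(zq)_n(-c)_{n+1}}$, one applies Agarwal's identity \eqref{me} \emph{before} differentiating, with the specialization $\a=-q/\tau$, $\g=c$, $\b=zq$, $\d=-cq$, $t=\tau z$ followed by $\tau\to0$ (this limit is what manufactures the $q^{n(n+1)/2}$, a maneuver absent from your outline). After this transformation: (i) the leading infinite product cancels identically against the product term coming from \eqref{3varrt}, so no logarithmic derivatives of theta-type products ever need to be computed; (ii) the piece ${}_2\phi_1\big(q,q/t;q\b/(\a t);q/\a\big)-1$ specializes to $\sum_{j\ge1}q^{j(j+1)/2}z^{-j}/(-q)_j$, which is where $\sigma(q)$ comes from directly; and (iii) every remaining term carries an explicit factor $1-1/z$, so applying the $\e$-operator merely evaluates those terms at $z=1$, and the last of them, after an application of \eqref{1551fine}, is precisely the double sum. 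Without this pre-transformation your derivative identity stalls at the harmonic-weighted series, so the proof as proposed does not go through.
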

For $|c|, |d|<|a|, |b|<1$, the four-variable reciprocity theorem is given by \cite[Theorem 1.2]{kang}
\begin{align}\label{4varrt}
\rho_{4}(a,b,c,d)-\rho_4(b, a, c, d)=\left(\frac{1}{b}-\frac{1}{a}\right)\frac{(d,c,cd/(ab),aq/b, bq/a, q)_{\infty}}{(-d/a, -d/b, -c/a, -c/b, -aq, -bq)_{\infty}},
\end{align}
where
\begin{equation*}
\rho_{4}(a,b,c,d):=\left(1+\frac{1}{b}\right)\sum_{n=0}^{\infty}\frac{(d, c, cd/(ab))_n(1+cdq^{2n}/b)(-1)^nq^{n(n+1)/2}a^nb^{-n}}{(-aq)_n(-c/b, -d/b)_{n+1}}.
\end{equation*}
Using \eqref{4varrt}, we obtain the following new representation for $\sigma(q)$ which consists of two free complex parameters $c, d$:
\begin{theorem}\label{newsigma2}
Let $|c|<1$ and $|d|<1$. 
Then
\begin{align}\label{idsigma2}
\sigma(q)=\frac{(-c,-d)_{\infty}}{(-cd)_{\infty}}\sum_{n=0}^{\infty}\frac{(-cd)_n(1-cdq^{2n})q^{n(n+1)/2}}{(-q)_n(1-cq^n)(1-dq^n)}+\Lambda(c, d, q),
\end{align}
where
{\allowdisplaybreaks\begin{align}\label{Lambda}
\Lambda(c, d, q)&=1-\frac{3cd(-cq,-dq)_{\infty}}{(-cd,-q)_{\infty}}\sum_{n=0}^{\infty}\frac{(c,d,-cd)_n}{(-cq,-dq,q)_n}q^{\frac{n(n+1)}{2}+2n}\nonumber\\
&\quad-\frac{(-dq,c)_{\infty}}{(-cd,-q)_{\infty}}\sum_{p=0}^{\infty}\frac{(-q)_p(-1)_pc^p}{(q)_p}\sum_{k=0}^{\infty}\frac{(-q^{p+1})_kc^k}{(q^{p+1})_k}\nonumber\\
&\quad\quad\times\sum_{n=0}^{\infty}\frac{(-cd,d)_n}{(-dq,q)_n}q^{\frac{n(n+1)}{2}+(p+k)n}(1+cdq^{2n}(1+q^{p})(1+q^{p+1}))\nonumber\\
&\quad-\frac{(d,c)_{\infty}}{(-cd,-q)_{\infty}}\sum_{p=1}^{\infty}\frac{(-q)_p(-1)_pd^p}{(q)_p}\sum_{k=0}^{\infty}\frac{(-q)_k(-q^p)_kc^k}{(q)_k}\sum_{j=0}^{\infty}\frac{(-q^{p+1})_jd^j}{(q^{p+1})_j}\nonumber\\
&\quad\quad\times\sum_{n=0}^{\infty}\frac{(-cd)_n}{(q)_n}q^{\frac{n(n+1)}{2}+(p+k+j)n}(1+cdq^{2n}(1+q^{p+k})(1+q^{p+k+1}))\nonumber\\
&\quad-2(d,c)_{\infty}\sum_{p=1}^{\infty}\frac{(-cd)^{p}}{(q)_p}\sum_{j=0}^{\infty}\frac{(-q)_jd^j}{(q)_j}\sum_{k=0}^{\infty}\frac{(-q)_kc^k}{(q)_k}\nonumber\\
&\quad\quad\times\sum_{m=0}^{\infty}\frac{(-cd)^m}{(q^{p+1})_m(-q^{p+k+j})_{m+1}}\left(1+cd\frac{(1+q^{p+k+j})(1+q^{p+k+j+1})}{(1+q^{p+k+j+m+1})(1+q^{p+k+j+m+2})}\right).
\end{align}}
\end{theorem}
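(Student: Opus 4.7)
The plan is to parallel the proof of Theorem \ref{newsigma1} by applying the strategy of Andrews' original proofs in \cite{andrews1986} to Kang's four-variable reciprocity theorem \eqref{4varrt} in place of the three-variable version \eqref{3varrt}.

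First, I would specialize the parameters $a,b$ in \eqref{4varrt} so that $\sigma(q)$ emerges naturally. Guided by Andrews' proofs of \eqref{rsi1} and \eqref{rsi2}, which extract $\sigma(q)$ from Ramanujan's reciprocity theorem by taking $a=1$ and letting $b\to -1$, the same limit applied to \eqref{4varrt} should produce $\sigma(q)$ together with correction terms depending on $c$ and $d$. At $b=-1$ the factors $(-c/b)_{n+1}$ and $(-d/b)_{n+1}$ reduce to $(c)_{n+1}$ and $(d)_{n+1}$, which accounts for the $(1-cq^n)(1-dq^n)$ denominators in the main term of \eqref{idsigma2}, while $(1+cdq^{2n}/b)$ reduces to $1-cdq^{2n}$; the ratios of infinite $q$-products on the right-hand side of \eqref{4varrt} simplify (after rearrangement) to the prefactor $(-c,-d)_{\infty}/(-cd)_{\infty}$. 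Because the factor $1+1/b$ in the definition of $\rho_4$ vanishes at $b=-1$, one first divides \eqref{4varrt} through by $1+1/b$ and then takes the limit, with $\sigma(q)$ appearing as the coefficient of $(b+1)$ in the resulting expansion.

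Second, I would reshape the ``residual'' series --- everything on the two sides of the specialized identity other than the putative main term --- into the explicit form \eqref{Lambda} using a generalization of the recent transformation of Andrews, Schultz, Yee, and the second author. This generalization, which I would state and prove as a separate lemma, expands ratios of infinite $q$-products into multiple $q$-series by iterated application of the $q$-binomial theorem and Heine-type transformations; each application contributes one of the summation indices $p$, $k$, or $j$ appearing in \eqref{Lambda}. The nested factors $1+cdq^{2n}(1+q^{p})(1+q^{p+1})$ and $1+cdq^{2n}(1+q^{p+k})(1+q^{p+k+1})$ in the third and fourth terms of \eqref{Lambda} should arise from combining the $(1+cdq^{2n}/b)$ factor with the expansion variables introduced at successive stages. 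The asymmetric lower limits --- the third sum starting at $p=0$ while the fourth and fifth begin at $p=1$ --- should correspond to extracting a $p=0$ diagonal contribution at an intermediate stage before iterating again.

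The main obstacle is this second step. In the three-variable setting a single application of Agarwal's transformation suffices to produce the clean double sum of \eqref{idsigma1}; here the extra parameter $d$ and the additional factor $1+cdq^{2n}/b$ force a multi-stage expansion that yields the triple and quadruple sums visible in \eqref{Lambda}, and tracking which summation index originates from which product-to-sum expansion is a delicate bookkeeping exercise. As a consistency check on the final form I would verify (i) that the $c=d=0$ specialization reduces to $\sigma(q)=\sum_n q^{n(n+1)/2}/(-q)_n$, which amounts to $\Lambda(0,0,q)=0$ via Euler's identity $\sum_n q^{n(n+1)/2}/(q)_n=(-q)_{\infty}$, and (ii) that the $d=0$ specialization recovers \eqref{idsigma1}, confirming that Theorem \ref{newsigma2} is a genuine two-parameter generalization of Theorem \ref{newsigma1}.
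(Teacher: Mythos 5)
Your proposal follows essentially the same route as the paper: the authors specialize \eqref{4varrt} at $a=-z$, $b=1$ (the mirror image of your $a=1$, $b\to-1$), divide by the vanishing factor $1-z^{-1}$ and apply L'Hopital's rule via the $\e$-operator, and the separate lemma you call for is exactly the paper's Theorem \ref{extandagar9}, proved by the iterated $q$-binomial/Heine expansion you describe, with the $d=0$ consistency check carried out in Section \ref{spcor}. The only implementation detail you leave unspecified is how the factor $1+cdq^{2n}$ is handled --- the paper splits it as $1+cdq^{2n}$ and applies Theorem \ref{extandagar9} twice, with arguments $t=\tau z$ and $t=\tau zq^2$, which is precisely what produces the nested factors you flag as delicate.
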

It will be shown later that letting $d=0$ in Theorem \ref{newsigma2} results in Theorem \ref{newsigma1}. Still, pedagogically it is sound to first give a proof of Theorem \ref{newsigma1} and then proceed to that of Theorem \ref{newsigma2}, especially since the complexity involved in the former is much lesser than that in the latter.

In order to obtain \eqref{idsigma2}, we derive a new nine-parameter transformation contained in the following theorem which generalizes previous transformations due to Agarwal \cite{agar1} (see Equation \eqref{me} below), and due to Andrews, Dixit, Schultz and Yee \cite{adsy1} (see Equation \ref{extandagar} below).
\begin{theorem}\label{extandagar9}
For $\b, \d, f, h, t\neq q^{-j}, j\geq 0$, the following identity is true:
{\allowdisplaybreaks\begin{align}\label{extandagarfur}
&\sum_{n=0}^{\infty}\frac{(\a)_{n}(\g)_{n}(e)_n(g)_n}{(\b)_{n}(\d)_{n}(f)_{n}(h)_n}t^n\nonumber\\
&=\frac{(g, e, \g,\frac{\b}{\a}, q, \a t, \frac{q}{\a t}, \frac{\d q}{\b}, \frac{fq}{\b}, \frac{hq}{\b})_{\infty}}{(h, f, \d, \frac{q}{\a}, \b, \frac{\b}{\a t}, \frac{\a tq}{\b}, \frac{\g q}{\b}, \frac{e q}{\b}, \frac{gq}{\b})_{\infty}}{}_{4}\phi_{3}\left(\begin{matrix}\frac{\a q}{\b},& \frac{\g q}{\b},&\frac{e q}{\b},&\frac{gq}{\b}\\
&\frac{\d q}{\b},& \frac{fq}{\b},& \frac{hq}{\b}\end{matrix}\, ;q, t\right)\nonumber\\
&\quad+\bigg(1-\frac{q}{\b}\bigg)\frac{(g,e, \g, t, \frac{\d q}{\b}, \frac{fq}{\b}, \frac{hq}{\b})_{\infty}}{(h, f, \d, \frac{\a t}{\b}, \frac{\g q}{\b}, \frac{e q}{\b}, \frac{gq}{\b})_{\infty}}{}_{4}\phi_{3}\left(\begin{matrix}\frac{\a q}{\b},& \frac{\g q}{\b},&\frac{e q}{\b},&\frac{gq}{\b}\\
&\frac{\d q}{\b},& \frac{fq}{\b},& \frac{hq}{\b}\end{matrix}\, ;q, t\right)
\bigg({}_{2}\phi_{1}\bigg(\begin{matrix}q,& \frac{q}{t}\\
&\frac{\b q}{\a t}\end{matrix}\, ;\frac{q}{\a}\bigg)-1\bigg)\nonumber\\
&\quad+\bigg(1-\frac{q}{\b}\bigg)\frac{(g, e, \g, t, \frac{fq}{\b}, \frac{hq}{\b})_{\infty}}{(h, f, \d, \frac{\a t}{\b}, \frac{e q}{\b}, \frac{gq}{\b})_{\infty}}\sum_{p=0}^{\infty}\frac{(\frac{\d}{\g})_p(\frac{\a t}{\b})_p\g^p}{(t)_p(q)_p}\sum_{k=0}^{\infty}\frac{(\frac{\d q^p}{\g})_k(\frac{\g q}{\b})^k}{(q^{p+1})_k}{}_{3}\phi_{2}\bigg(\begin{matrix}\frac{\a q}{b},& \frac{e q}{\b},&\frac{gq}{\b}\\
&\frac{fq}{\b},&\frac{hq}{\b}\end{matrix}\, ;q, tq^{p+k}\bigg)\nonumber\\
&\quad+\bigg(1-\frac{q}{\b}\bigg)\frac{(g, e, \g, t, \frac{hq}{\b})_{\infty}}{(h, f, \d, \frac{\a t}{\b}, \frac{gq}{\b})_{\infty}}\nonumber\\
&\quad\quad\times\sum_{p=1}^{\infty}\frac{(\frac{f}{e})_p(\frac{\a t}{\b})_pe^p}{(t)_p(q)_p}\sum_{k=0}^{\infty}\frac{(\frac{\d}{\g})_k(\frac{\a tq^p}{\b})_k\g^k}{(q)_k(tq^{p})_k}\sum_{j=0}^{\infty}\frac{(\frac{fq^p}{e})_j(\frac{e q}{\b})^j}{(q^{p+1})_j}{}_{2}\phi_{1}\bigg(\begin{matrix}\frac{\a q}{b},&\frac{gq}{\b}\\
&\frac{hq}{\b}\end{matrix}\, ;q, tq^{p+k+j}\bigg)\nonumber\\
&\quad+\bigg(1-\frac{q}{\b}\bigg)\frac{(g,e, \g)_{\infty}}{(h, f, \d)_{\infty}}\sum_{p=1}^{\infty}\frac{(\frac{h}{g})_pg^p}{(q)_p}\sum_{j=0}^{\infty}\frac{(\frac{f}{e})_je^j}{(q)_j}\sum_{k=0}^{\infty}\frac{(\frac{\d}{\g})_k\g^k}{(q)_k}\sum_{m=0}^{\infty}\frac{(\frac{hq^p}{g})_m(tq^{p+k+j})_m}{(q^{p+1})_m(\frac{\a tq^{p+k+j}}{\b})_{m+1}}\left(\frac{gq}{\b}\right)^{m}.
\end{align}}
\end{theorem}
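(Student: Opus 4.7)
The strategy is to derive Theorem \ref{extandagar9} from the six-parameter transformation \eqref{extandagar} of \cite{adsy1} by peeling the extra pair of parameters $(g,h)$ off the left-hand side using the $q$-binomial theorem. Namely, we invoke
\begin{align*}
\frac{(g;q)_n}{(h;q)_n}=\frac{(g;q)_{\infty}}{(h;q)_{\infty}}\cdot\frac{(hq^n;q)_{\infty}}{(gq^n;q)_{\infty}}=\frac{(g;q)_{\infty}}{(h;q)_{\infty}}\sum_{p=0}^{\infty}\frac{(h/g;q)_p\,g^p q^{np}}{(q;q)_p},
\end{align*}
so that, after interchanging the order of summation (legitimate for $|t|$ sufficiently small), the left-hand side of \eqref{extandagarfur} becomes
\begin{align*}
\frac{(g;q)_{\infty}}{(h;q)_{\infty}}\sum_{p=0}^{\infty}\frac{(h/g;q)_p\,g^p}{(q;q)_p}\sum_{n=0}^{\infty}\frac{(\a)_n(\g)_n(e)_n}{(\b)_n(\d)_n(f)_n}(tq^p)^n.
\end{align*}
The inner series matches the left-hand side of \eqref{extandagar} with $t$ replaced by $tq^p$, so that identity applies term by term.

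Applying \eqref{extandagar} produces an outer $p$-sum of its four constituent pieces. For the two ``main'' pieces, the dependence on $p$ enters through factors of the form $(tq^p;q)_{\infty}$ and $(\a tq^p/\b;q)_{\infty}$; a second use of the $q$-binomial theorem collapses the $p$-summation in closed form and yields the first two terms of \eqref{extandagarfur}. The remaining two pieces of \eqref{extandagar} contain nested sums that do not collapse; instead, the outer $p$-sum simply joins them, producing the triple-, quadruple-, and quintuple-sum terms three, four, and five of \eqref{extandagarfur}. The peeling coefficients $(\d/\g;q)_k\g^k/(q;q)_k$, $(f/e;q)_j e^j/(q;q)_j$, and $(h/g;q)_p g^p/(q;q)_p$ appearing in those terms correspond precisely to the three successive $q$-binomial expansions of the pairs $(\g,\d)$, $(e,f)$, and $(g,h)$ inherited from the iteration already built into \eqref{extandagar}.

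The principal technical obstacle is the intricate Pochhammer bookkeeping: one must repackage the telescoped $p$-summation together with the prefactor $(g;q)_{\infty}/(h;q)_{\infty}$ and the auxiliary products produced by \eqref{extandagar} so that the ten-over-ten infinite-product prefactor of the first term of \eqref{extandagarfur} appears in the exact form stated, and one must re-index the nested sums in the later terms with care. Convergence is immediate for $|t|$ in a sufficiently small neighborhood of $0$; the hypothesis $\b,\d,f,h,t\neq q^{-j}$ ensures no denominator vanishes; the identity then extends to the stated parameter range by analytic continuation, as both sides are meromorphic in the remaining parameters.
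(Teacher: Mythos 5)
You open exactly as the paper does: write $(g)_n/(h)_n=((g)_{\infty}/(h)_{\infty})\cdot((hq^n)_{\infty}/(gq^n)_{\infty})$, expand the second factor by \eqref{qbin}, interchange summations, and feed the inner $n$-sum into the six-parameter transformation \eqref{extandagar} with $t$ replaced by $tq^p$. The gap is in everything after that. Your claim that the first two pieces of \eqref{extandagar} ``collapse'' under the outer $p$-sum by a second application of the $q$-binomial theorem fails for the second piece: there the peeling index enters not only through $(tq^p)_{\infty}$ and $(\a tq^p/\b)_{\infty}$ but also inside ${}_{2}\phi_{1}\bigl(q,\, q^{1-p}/t;\, \b q^{1-p}/(\a t);\, q,\, q/\a\bigr)-1$ and in the argument $tq^p$ of the accompanying ${}_{3}\phi_{2}$, so no single $q$-binomial summation disposes of $p$. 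Equally, terms three, four and five of \eqref{extandagarfur} are not obtained by letting ``the outer $p$-sum simply join'' the last two pieces of \eqref{extandagar}: in the fifth term the new pair $(g,h)$ appears both in the outermost sum (which starts at $p=1$, not $p=0$) and in the innermost one through $(hq^p/g)_m(gq/\b)^m$, whereas naive joining would leave $(g,h)$ only as an outermost sum from $p=0$ with the pair $(e,f)$ still occupying the innermost slot --- a structurally different expression.

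What actually makes the rearrangement work, and what is absent from your outline, is the splitting identity \eqref{v2intg}: the $p$-dependent ${}_{2}\phi_{1}$ above, written as $\sum_{r\ge 0}(q^{1-k-j-m}/t)_r(q/\a)^r/(\b q^{1-k-j-m}/(\a t))_r$, is decomposed into an infinite tail that is independent of the peeling indices (this is what produces the clean second term of \eqref{extandagarfur}) plus a finite sum of length $m+j+k$; that finite sum is then cut into the three ranges $[0,k]$, $[k+1,k+j]$, $[k+j+1,m+j+k]$, and each resulting block is brought to the stated form by repeated use of Heine's transformation \eqref{heine}. Without this splitting-and-regrouping step your argument only reproduces the paper's initial decomposition $S=X+Y$ and the evaluation of $X$ (the first term of \eqref{extandagarfur}); it does not yield the remaining four terms in the form asserted.
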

A version of the above formula, and also of \eqref{extandagar}, in terms of $q$-Lauricella functions, was obtained by Gupta \cite[p.~53]{mgupta} in his PhD thesis. However, his versions are not as explicit as the ones in \eqref{extandagarfur} and \eqref{extandagar}. We remark that Gupta has obtained a general transformation of these results, with $r$ $q$-shifted factorials in the numerator and $r$ in the denominator, in terms of $q$-Lauricella functions. However, one can easily anticipate such general transformation by observing the pattern occurring in Agarwal's identity \eqref{me}, \eqref{extandagar} and \eqref{extandagarfur}. To avoid digression, we do not pursue it here.

This paper is organized as follows. In Section \ref{prelim}, we collect formulas from the literature that are used in the sequel. Section \ref{3var} is devoted to proving Theorem \ref{newsigma1} while Section \ref{4var} to proving Theorem \ref{newsigma2}, and for deriving Theorem \ref{newsigma1} from Theorem \ref{newsigma2}. We conclude this paper with Section \ref{cr} consisting of some remarks and directions for further research.
\section{Preliminaries}\label{prelim}
The $q$-binomial theorem \cite[p.~17, Equation (2.2.1)]{gea} states that for $|z|<1$,
\begin{equation}\label{qbin}
\sum_{n=0}^{\infty}\frac{(a;q)_{n}z^n}{(q;q)_n}=\frac{(az;q)_{\infty}}{(z;q)_{\infty}}.
\end{equation}

For $|z|<1$ and $|b|<1$, Heine's transformation \cite[p.~38]{gea} is given by
\begin{equation}\label{heine}
{}_2\phi_{1}\left(\begin{matrix}a,& b\\
&c\end{matrix}\, ;q, t\right)=\frac{(b, at)_{\infty}}{(c, t)_{\infty}}{}_2\phi_{1}\left(\begin{matrix}c/b,& t\\
&at\end{matrix}\, ;q, b\right),
\end{equation}
where as its second iterate \cite[p.~38, \text{last line}]{gea} is
\begin{equation}\label{heine2}
{}_2\phi_{1}\left(\begin{matrix}a,& b\\
&c\end{matrix}\, ;q, t\right)=\frac{(c/b, bt)_{\infty}}{(c, t)_{\infty}}{}_2\phi_{1}\left(\begin{matrix}b,& abt/c\\
&bt\end{matrix}\, ;q, c/b\right).
\end{equation}
Here ${}_{r+1}\phi_{r}$ is the basic hypergeometric series defined by
\begin{equation*}\label{bhs}
{}_{r+1}\phi_{r}\left(\begin{matrix} a_1, a_2, \ldots, a_{r+1}\\
  b_1,  b_2, \ldots, b_{r} \end{matrix}\,; q,
z \right) :=\sum_{n=0}^{\infty} \frac{(a_1;q)_n (a_2;q)_n \cdots (a_r;q)_n}{(q;q)_n (b_1;q)_n \cdots (b_{s};q)_n} z^n.
\end{equation*}
We need the following identity \cite[p.~17, Equation (15.51)]{fine}:
\begin{equation}\label{1551fine}
\sum_{n=0}^{\infty}\frac{t^n}{(bq)_n}=\frac{(1-b)}{(t)_{\infty}}\sum_{n=0}^{\infty}\frac{(-t)^nq^{n(n+1)/2}}{(q)_n(1-bq^n)}.
\end{equation}
Agarwal \cite[Equation (3.1)]{agar1} obtained the following `mild' extension/generalization of an important identity of Andrews \cite[Theorem 1]{gea90} in the sense that we get Andrews' identity from the following result when $t=q$:
\begin{align}\label{me}
&\sum_{n=0}^{\infty}\frac{(\a)_{n}(\g)_{n}}{(\b)_{n}(\d)_{n}}t^n\nonumber\\
&=\frac{(q/(\a t), \g, \a t, \b/\a, q)_{\infty}}{(\b/(\a t), \d, t, q/\a, \b)_{\infty}}{}_{2}\phi_{1}\bigg(\begin{matrix}\d/\g,& t\\
&q\a t/\b\end{matrix}\, ;q, \g q/\b\bigg)\nonumber\\
&\quad+\frac{(\g)_{\infty}}{(\d)_{\infty}}\left(1-\frac{q}{\b}\right)\sum_{m=0}^{\infty}\frac{(\d/\g)_{m}(t)_m}{(q)_m(\a t/\b)_{m+1}}(q\g/\b)^m\left({}_{2}\phi_{1}\bigg(\begin{matrix}q,& q/t\\
&q\b/(\a t)\end{matrix}\, ;q, q/\a\bigg)-1\right)\nonumber\\
&\quad+\frac{(\g)_{\infty}}{(\d)_{\infty}}\left(1-\frac{q}{\b}\right)\sum_{p=0}^{\infty}\frac{\g^p(\d/\g)_p}{(q)_p}\sum_{m=0}^{\infty}\frac{(\d q^p/\g)_m(tq^p)_m}{(q^{1+p})_m(\a tq^p/\b)_{m+1}}(q\g/\b)^m.
\end{align}
The following generalization of the above identity of Agarwal was recently obtained in \cite[Theorem 3.1]{adsy1} for $\b,\d, f, t\neq q^{-j}, j\geq 0$:
{\allowdisplaybreaks\begin{align}\label{extandagar}
&\sum_{n=0}^{\infty}\frac{(\a)_{n}(\g)_{n}(e)_n}{(\b)_{n}(\d)_{n}(f)_{n}}t^n\nonumber\\
&=\frac{(e, \g, \frac{\b}{\a}, q, \a t, \frac{q}{\a t}, \frac{\d q}{\b}, \frac{fq}{\b})_{\infty}}{(f, \d, \frac{q}{\a}, \b, \frac{\b}{\a t}, \frac{\a tq}{\b}, \frac{\g q}{\b}, \frac{e q}{\b})_{\infty}}{}_{3}\phi_{2}\left(\begin{matrix}\frac{\a q}{\b},& \frac{\g q}{\b},&\frac{e q}{\b}\\
&\frac{\d q}{\b},& \frac{fq}{\b}\end{matrix}\, ;q, t\right)\nonumber\\
&\quad+\bigg(1-\frac{q}{\b}\bigg)\frac{(e, \g, t, \frac{\d q}{\b}, \frac{fq}{\b})_{\infty}}{(f, \d, \frac{\a t}{\b}, \frac{\g q}{\b}, \frac{e q}{\b})_{\infty}}{}_{3}\phi_{2}\bigg(\begin{matrix}\frac{\a q}{\b},& \frac{\g q}{\b},&\frac{e q}{\b}\\
&\frac{\d q}{\b},& \frac{fq}{\b}\end{matrix}\, ;q, t\bigg)\bigg({}_{2}\phi_{1}\bigg(\begin{matrix}q,& \frac{q}{t}\\
&\frac{\b q}{\a t}\end{matrix}\, ;\frac{q}{\a}\bigg)-1\bigg)\nonumber\\
&\quad+\bigg(1-\frac{q}{\b}\bigg)\frac{(e, \g, t, \frac{fq}{\b})_{\infty}}{(f, \d, \frac{\a t}{\b}, \frac{e q}{\b})_{\infty}}\sum_{p=0}^{\infty}\frac{(\frac{\d}{\g})_p(\frac{\a t}{\b})_p\g^p}{(t)_p(q)_p}\sum_{k=0}^{\infty}\frac{(\frac{\d q^p}{\g})_k(\frac{q\g}{\b})^k}{(q^{1+p})_k}{}_{2}\phi_{1}\bigg(\begin{matrix}\frac{\a q}{b},& \frac{e q}{\b}\\
&\frac{fq}{\b}\end{matrix}\, ;q, tq^{k+p}\bigg)\nonumber\\
&\quad+\bigg(1-\frac{q}{\b}\bigg)\frac{(e, \g)_{\infty}}{(f, \d)_{\infty}}\sum_{p=1}^{\infty}\frac{(\frac{f}{e})_pe^p}{(q)_p}\sum_{k=0}^{\infty}\frac{(\frac{\d}{\g})_k\g^k}{(q)_k}\sum_{m=0}^{\infty}\frac{(\frac{fq^p}{e})_m(tq^{p+k})_m}{(q^{1+p})_m(\frac{\a tq^{p+k}}{\b})_{m+1}}\left(\frac{e q}{\b}\right)^{m}.
\end{align}}
We will also make use of the $\e$-operator acting on a differentiable function $f$ by \cite{andrews1986}
\begin{equation*}
\e(f(z))=f'(1).
\end{equation*}
\section{The three-variable case}\label{3var}
We prove Theorem \ref{newsigma1} here. Letting $a=-z$ and $b=1$ in \eqref{3varrt} gives
\begin{equation}\label{3varrtspel}
\rho_3(1,-z,c)=\rho_{3}(-z,1,c)-\frac{(c,-zq,-z^{-1},q)_{\infty}}{(cz^{-1},-c,zq,-q)_{\infty}}.
\end{equation}
Divide both sides by $(1-z^{-1})$ and let $z\to 1$. It is easy to see that the left side becomes  $\displaystyle\sum_{n=0}^{\infty}\frac{q^{n(n+1)/2}}{(-q)_n(1-cq^n)}$, which we denote by $\sigma(c, q)$. Denote the right-hand side of the above equation by $f(z)$. Note that 
\begin{align*}
\lim_{z\to 1}f(z)=2\sum_{n=0}^{\infty}\frac{(c)_nq^{n(n+1)/2}}{(q)_n(-c)_{n+1}}-2\frac{(-q)_{\infty}}{(-c)_{\infty}}=0, 
\end{align*}
since replacing $c$ by $-cq$, substituting $a=c, b=-q/\tau, t=\tau$, and then letting $\tau\to 0$ in \eqref{heine2} gives 
\begin{align*}
&\sum_{n=0}^{\infty}\frac{(c)_nq^{n(n+1)/2}}{(q)_n(-cq)_{n}}=\lim_{\tau\to 0}{}_2\phi_{1}\left(\begin{matrix}c,& -q/\tau\\
&-cq\end{matrix}\, ;q, \tau\right)=\lim_{\tau\to 0}\frac{(c\tau, -q)_{\infty}}{(-cq, \tau)_{\infty}}{}_2\phi_{1}\left(\begin{matrix}-q/\tau,& 1\\
&-q\end{matrix}\, ;q, c\tau\right)\nonumber\\
&=\lim_{\tau\to 0}\frac{(c\tau,-q)_{\infty}}{(-cq,\tau)_{\infty}}=\frac{(-q)_{\infty}}{(-cq)_{\infty}}.
\end{align*}
This result can also be found in \cite[Corollary 7.5]{kang}.

Hence using L'Hopital's rule, we see that
\begin{align*}
\lim_{z\to 1}\frac{f(z)}{1-z^{-1}}=f'(1)=\e(f(z)),
\end{align*}
so that from \eqref{3varrtspel},
\begin{align}\label{rcq}
\sum_{n=0}^{\infty}\frac{q^{n(n+1)/2}}{(-q)_n(1-cq^n)}=\e\left(\rho_{3}(-z,1,c)-\frac{(c,-zq,-z^{-1},q)_{\infty}}{(cz^{-1},-c,zq,-q)_{\infty}}\right).
\end{align}

The idea now is to rightly transform $\rho_3(-z,1,c)=2\displaystyle\sum_{n=0}^{\infty}\frac{(c)_nz^nq^{n(n+1)/2}}{(zq)_n(-c)_{n+1}}$ into an expression which is amenable to the $\e$-operator. To that end, we invoke \eqref{me}, the reasons for which will be clear soon. Let $\a=-q/\tau, \g=c, \b=zq, \d=-cq$ and $t=\tau z$ in \eqref{me}. Then,
\begin{align*}
&\sum_{n=0}^{\infty}\frac{(-q/\tau)_n(c)_n}{(zq)_n(-cq)_n}(\tau z)^n\nonumber\\
&=\frac{(-z^{-1}, c, -zq, -\tau z, q)_{\infty}}{(-1, -cq, \tau z, -\tau, zq)_{\infty}}\sum_{m=0}^{\infty}\frac{(\tau z)_m}{(q)_m}\left(\frac{c}{z}\right)^m+\frac{(c)_{\infty}}{2(-cq)_{\infty}}\left(1-\frac{1}{z}\right)\sum_{m=0}^{\infty}\frac{(\tau z)_m(\frac{c}{z})^m}{(q)_m}\sum_{j=1}^{\infty}\frac{\left(\frac{q}{\tau z}\right)_j}{(-q)_j}(-\tau)^j\nonumber\\
&\quad+\frac{(c)_{\infty}}{(-cq)_{\infty}}\left(1-\frac{1}{z}\right)\sum_{p=0}^{\infty}\frac{(-q)_pc^p}{(q)_p}\sum_{m=0}^{\infty}\frac{(\tau zq^p)_m}{(q^{p+1})_m}\frac{(c/z)^m}{(1+q^p)}.
\end{align*} 
Now use \eqref{qbin} to evaluate sums over $m$ in the first two expressions, then let $\tau\to 0$ on both sides, separate the term corresponding to $p=0$ in the double sum followed by another application of \eqref{qbin}, and finally multiply throughout by $2/(1+c)$ to obtain
\begin{align}\label{aftagar}
&2\sum_{n=0}^{\infty}\frac{(c)_nz^nq^{n(n+1)/2}}{(zq)_n(-c)_{n+1}}\nonumber\\
&=\frac{(-z^{-1}, c, -zq, q)_{\infty}}{(-q, -c, zq,c/z)_{\infty}}+\frac{(c)_{\infty}}{(-c,c/z)_{\infty}}\left(1-\frac{1}{z}\right)\sum_{j=1}^{\infty}\frac{q^{j(j+1)/2}z^{-j}}{(-q)_j}\nonumber\\
&\quad+\frac{(c)_{\infty}}{(-c,c/z)_{\infty}}\left(1-\frac{1}{z}\right)+2\frac{(c)_{\infty}}{(-c)_{\infty}}\left(1-\frac{1}{z}\right)\sum_{p=1}^{\infty}\frac{(-q)_pc^p}{(q)_p(1+q^p)}\sum_{m=0}^{\infty}\frac{(c/z)^m}{(q^{p+1})_m}\nonumber\\
&=\frac{(-z^{-1}, c, -zq, q)_{\infty}}{(-q, -c, zq,c/z)_{\infty}}+\frac{(c)_{\infty}}{(-c,c/z)_{\infty}}\left(1-\frac{1}{z}\right)\sum_{j=1}^{\infty}\frac{q^{j(j+1)/2}z^{-j}}{(-q)_j}\nonumber\\
&\quad+\frac{(c)_{\infty}}{(-c,c/z)_{\infty}}\left(1-\frac{1}{z}\right)+2\frac{(c)_{\infty}}{(-c, c/z)_{\infty}}\left(1-\frac{1}{z}\right)\sum_{p=1}^{\infty}\frac{(-q)_{p-1}c^p}{(q)_{p-1}}\sum_{n=0}^{\infty}\frac{(-c/z)^nq^{n(n+1)/2}}{(q)_n(1-q^{n+p})},
\end{align}
where in the last step we applied \eqref{1551fine} with $t=c/z$ and $b=q^{p}$. 

Now substitute \eqref{aftagar} in \eqref{rcq} and then apply the $\e$-operator to deduce that
\begin{align*}
&\sum_{n=0}^{\infty}\frac{q^{n(n+1)/2}}{(-q)_n(1-cq^n)}\nonumber\\
&=\frac{(\sigma(q)-1)}{(-c)_{\infty}}+\frac{1}{(-c)_{\infty}}+\frac{2}{(-c)_{\infty}}\sum_{p=1}^{\infty}\frac{(-q)_{p-1}c^p}{(q)_{p-1}}\sum_{n=0}^{\infty}\frac{(-c)^nq^{n(n+1)/2}}{(q)_n(1-q^{n+p})},
\end{align*}
which is nothing but \eqref{idsigma1}. This completes the proof.

\textbf{Remark 1.} If we explicitly evaluate $\e\left(\frac{(c,-zq,-z^{-1},q)_{\infty}}{(cz^{-1},-c,zq,-q)_{\infty}}\right)$ using the Jacobi triple product identity \cite[p.~28, Theorem 2.8]{gea}, then, from \eqref{rcq}, we obtain upon simplification
\begin{equation*}
\sigma(c, q)=2\e\left(\sum_{n=0}^{\infty}\frac{(c)_nz^nq^{n(n+1)/2}}{(zq)_n(-c)_{n+1}}\right)+S(c, q)+2S(c, q)\left(\sum_{n=0}^{\infty}\frac{cq^n}{1-cq^n}-\sum_{n=1}^{\infty}\frac{q^n}{1-q^n}\right),
\end{equation*}
where $S(c, q)=(-q)_{\infty}/(-c)_{\infty}$. This is a one-variable generalization of \cite[Equation (3.5)]{andrews1986}, as can be easily seen with the help of \eqref{rsi1}.
\section{The four-variable case}\label{4var}
We begin with a lemma that is used several times in the sequel.
\begin{lemma}\label{4heine}
For $|c|<1, |d|<1$, we have
\begin{align*}
\sum_{n=0}^{\infty}\frac{(c, d, -cd)_n}{(-cq,-dq,q)_n}(1+cdq^{2n})q^{n(n+1)/2}=\frac{(-cd,-q)_{\infty}}{(-cq,-dq)_{\infty}}.
\end{align*}
\end{lemma}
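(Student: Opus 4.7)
The plan is to obtain the lemma as a specialization of the four-variable reciprocity theorem \eqref{4varrt}, in direct analogy with the $a=-z,\,b=1$ device already used in Section \ref{3var}. Specifically, I would substitute $a=-z$ and $b=1$ into \eqref{4varrt} and then evaluate the resulting identity at $z=1$.

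The first observation is that $\rho_4(1,-z,c,d)$ carries the prefactor $(1+1/b)|_{b=-z}=1-1/z$, which vanishes at $z=1$, while the remaining sum stays finite under $|c|,|d|<1$; hence this term contributes nothing at $z=1$. On the other hand, expanding the definition of $\rho_4$ gives
$$\rho_4(-z,1,c,d)=2\sum_{n=0}^{\infty}\frac{(d,c,-cd/z)_n(1+cdq^{2n})z^n q^{n(n+1)/2}}{(zq)_n(-c)_{n+1}(-d)_{n+1}},$$
and at $z=1$, combined with $(-c)_{n+1}=(1+c)(-cq)_n$ and $(-d)_{n+1}=(1+d)(-dq)_n$, this reduces to $2/((1+c)(1+d))$ times the left-hand side of the lemma. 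The right-hand side of \eqref{4varrt} under the same substitution is
$$(1+1/z)\,\frac{(d,c,-cd/z,-zq,-q/z,q)_\infty}{(d/z,-d,c/z,-c,zq,-q)_\infty},$$
and at $z=1$ the bulk of the $q$-Pochhammer symbols cancel in pairs, leaving simply $2(-cd,-q)_\infty/((-c,-d)_\infty)$. Equating the two computations and clearing the factor $(1+c)(1+d)/2$ produces the claimed identity.

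The only delicate point is that \eqref{4varrt} is stated under $|c|,|d|<|a|,|b|<1$, whereas the substitution $b=1$ lies on the boundary of this region. This is handled in exactly the same way as the analogous passage from \eqref{3varrt} to \eqref{3varrtspel} in the three-variable proof: both sides of \eqref{4varrt} are analytic functions of $b$ in a neighborhood of $b=1$ for $|c|,|d|<1$ (the factors $q^{n(n+1)/2}$ ensure absolute convergence of the $\rho_4$-series, and the singular factors at $b=1$ cancel), so the identity extends there by continuation. Apart from this mild continuity argument, no serious obstacle is anticipated—the proof is essentially a mechanical specialization.
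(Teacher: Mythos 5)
Your proof is correct. Both you and the paper obtain the lemma by specializing the four-variable reciprocity theorem, but the specializations differ in a way worth noting. The paper works with Proposition 8 of Chu and Zhang (which it observes is just \eqref{4varrt} in different variables) and substitutes $c=1/y$ there, so that the factor $(1-cy)$ multiplying the second bilateral sum vanishes \emph{identically}; one sum dies on the nose, no limit is taken, and the product side immediately collapses to $(-cd,-q)_{\infty}/(-cq,-dq)_{\infty}$ after renaming parameters. You instead set $a=-z$, $b=1$ and evaluate at $z=1$, killing the second sum only through the vanishing prefactor $1+1/b=1-1/z$; this forces you onto the boundary of the region $|c|,|d|<|a|,|b|<1$ and requires the continuation argument you sketch (which is legitimate, and indeed is the same continuation the paper tacitly uses when it makes the identical substitution $a=-z$, $b=1$ at the start of the proof of Theorem \ref{newsigma2}). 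Your computations check out: $\rho_4(-z,1,c,d)\big|_{z=1}=\tfrac{2}{(1+c)(1+d)}\sum_{n\ge0}\tfrac{(c,d,-cd)_n(1+cdq^{2n})q^{n(n+1)/2}}{(-cq,-dq,q)_n}$, the product side at $z=1$ is $2(-cd,-q)_{\infty}/(-c,-d)_{\infty}$, and absorbing $(1+c)(1+d)$ into $(-c,-d)_{\infty}$ gives the stated right-hand side. One aesthetic point: your argument is literally the assertion that the function $f(z)$ appearing in \eqref{bbt} vanishes at $z=1$, which is exactly the fact the paper needs the lemma for (to justify L'Hopital); there is no circularity since you derive it from \eqref{4varrt} directly, but the paper's choice of a non-degenerate substitution keeps the lemma's proof cleanly independent of the boundary case.
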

\begin{proof}
By Proposition 8 in \cite{chuzhang}, which is, in fact, equal to \eqref{4varrt}, we see that
\begin{align*}
&y\sum_{n=0}^{\infty}(1-q^{2n+1}y/x)\frac{\left(\frac{q}{bx},\frac{q}{cx},\frac{q}{dx}\right)_n}{(by, dy)_{n+1}(cyq)_n}(-bcdxy^2/q)^nq^{n(n+1)/2}\nonumber\\
&-x(1-cy)\sum_{n=0}^{\infty}(1-q^{2n+1}x/y)\frac{\left(\frac{q}{by},\frac{q}{cy},\frac{q}{dy}\right)_n}{(bx, cx, dx)_{n+1}}(-bcdx^2y/q)^nq^{n(n+1)/2}\nonumber\\
&=(y-x)\frac{(q,\frac{qy}{x},\frac{qx}{y}, bcxy, cdxy, bdxy)_{\infty}}{(bx,by,cx, cyq, dx, dy)_{\infty}}.
\end{align*}
Now let $d=q/(ux), b=q/(vx), c=1/y$ and $y=-uvx/q$ in the above identity to obtain upon simplification
\begin{align*}
\sum_{n=0}^{\infty}\frac{(u, v, -uv)_n}{(-uq,-vq,q)_n}(1+uvq^{2n})q^{n(n+1)/2}=\frac{(-uv,-q)_{\infty}}{(-uq,-vq)_{\infty}}.
\end{align*}
This completes the proof.
\end{proof}
We now first prove Theorem \ref{extandagar9} and then use it to prove Theorem \ref{newsigma2}. Since the underlying idea in the proof of Theorem \eqref{extandagar9} is similar to that involved in the proof of \eqref{me} (see \cite{agar1}) and in the proof of \eqref{extandagar} (see \cite{adsy1}), we will be very brief here. 

Let $S$ denote the sum on the left side of \eqref{extandagarfur}. Writing $(g)_n/(h)_n=((g)_{\infty}/(h)_{\infty})\cdot((hq^n)_{\infty}/(gq^n)_{\infty})$, then representing $((hq^n)_{\infty}/(gq^n)_{\infty})$ as a sum using \eqref{qbin}, interchanging the order of summation, and then employing \eqref{extandagar}, we find that
\begin{align}\label{S}
S=X+Y,
\end{align}
where
\begin{align*}
X&:=\frac{(g,e,\frac{\b}{\a},q,\frac{\d q}{\b},\frac{fq}{\b})_{\infty}}{(h,f,\d,\frac{q}{\a},\b,\frac{\g q}{\b},\frac{eq}{\b})_{\infty}}\sum_{m=0}^{\infty}\frac{(\frac{h}{g})_m(\a tq^m,\frac{q^{1-m}}{\a t})_{\infty}g^m}{(q)_m(\frac{\b q^{-m}}{\a t},\frac{\a tq^{m+1}}{\b})_{\infty}}{}_3\phi_{2}\bigg(\begin{matrix}\frac{\a q}{\b},& \frac{\g q}{\b},&\frac{e q}{\b}\\
&\frac{\d q}{\b},& \frac{fq}{\b}\end{matrix}\, ;q, tq^m\bigg),\nonumber\\
Y&:=\frac{(g,e,\g)_{\infty}(1-\frac{q}{\b})}{(h,f,\d)_{\infty}}\sum_{m=0}^{\infty}\frac{(\frac{h}{g})_mg^m}{(q)_m}\sum_{j=0}^{\infty}\frac{(\frac{f}{e})_je^j}{(q)_j(1-\frac{\a tq^{m+j}}{\b})}\sum_{k=0}^{\infty}\frac{(\frac{\d}{\g},\frac{\a tq^{m+j}}{\b})_k\g^k}{(q,\frac{\a tq^{j+1+m}}{\b})_k}\sum_{r=0}^{\infty}\frac{(\frac{q^{1-k-j-m}}{t})_r(\frac{q}{\a})^r}{(\frac{\b q^{1-k-j-m}}{\a t})_r}.
\end{align*}
To evaluate $X$, we write the ${}_3\phi_{2}$ in the form of series, interchange the order of summation, make use of the identity $(\b q^{-m}/(\a t))_{\infty}=(-\b/(\a t))^mq^{-m(m+1)/2}(\b/(\a t))_{\infty}(\a tq/\b)_m$, and then use \eqref{qbin} again to deduce
\begin{equation}\label{xeval}
X=\frac{(g, e, \g,\frac{\b}{\a}, q, \a t, \frac{q}{\a t}, \frac{\d q}{\b}, \frac{fq}{\b}, \frac{hq}{\b})_{\infty}}{(h, f, \d, \frac{q}{\a}, \b, \frac{\b}{\a t}, \frac{\a tq}{\b}, \frac{\g q}{\b}, \frac{e q}{\b}, \frac{gq}{\b})_{\infty}}{}_{4}\phi_{3}\left(\begin{matrix}\frac{\a q}{\b},& \frac{\g q}{\b},&\frac{e q}{\b},&\frac{gq}{\b}\\
&\frac{\d q}{\b},& \frac{fq}{\b},& \frac{hq}{\b}\end{matrix}\, ;q, t\right).
\end{equation}
Since
\begin{align}\label{v2intg}
\sum_{r=0}^{\infty}\frac{(\frac{q^{1-k-m-j}}{t})_r}{(\frac{\b q^{1-k-m-j}}{\a t})_r}\left(\frac{q}{\a}\right)^r
&=\frac{(t)_{m+j+k}}{\left(\frac{\a t}{\b}\right)_{m+j+k}}\left(\frac{q}{\b}\right)^{m+j+k}\bigg(\sum_{p=1}^{\infty}\frac{\left(\frac{q}{t}\right)_p}{\left(\frac{\b q}{\a t}\right)_p}\left(\frac{q}{\a}\right)^p+\sum_{p=0}^{m+j+k}\frac{(\frac{\a t}{\b})_p}{(t)_p}\left(\frac{\b}{q}\right)^p\bigg),
\end{align}
we observe that
\begin{align}\label{y12}
Y=Y_1+Y_2,
\end{align}
where $Y_1$ is associated with the infinite sum on the right of \eqref{v2intg} and $Y_2$ is associated with the finite sum. Even though the calculations for evaluating $Y_1$ and $Y_2$ are quite tedious, they are fairly straightforward. Using \eqref{heine}, it can be seen that
\begin{align}\label{y1}
Y_1&=\bigg(1-\frac{q}{\b}\bigg)\frac{(g,e, \g, t, \frac{\d q}{\b}, \frac{fq}{\b}, \frac{hq}{\b})_{\infty}}{(h, f, \d, \frac{\a t}{\b}, \frac{\g q}{\b}, \frac{e q}{\b}, \frac{gq}{\b})_{\infty}}{}_{4}\phi_{3}\left(\begin{matrix}\frac{\a q}{\b},& \frac{\g q}{\b},&\frac{e q}{\b},&\frac{gq}{\b}\\
&\frac{\d q}{\b},& \frac{fq}{\b},& \frac{hq}{\b}\end{matrix}\, ;q, t\right)
\bigg({}_{2}\phi_{1}\bigg(\begin{matrix}q,& \frac{q}{t}\\
&\frac{\b q}{\a t}\end{matrix}\, ;\frac{q}{\a}\bigg)-1\bigg).
\end{align}
Now write the finite sum on $p$ in $Y_2$ as
\begin{equation*}
\sum_{p=0}^{m+j+k}=\sum_{p=0}^{k}+\sum_{p=k+1}^{k+j}+\sum_{p=k+j+1}^{m+j+k},
\end{equation*}
and let $Y_{21}, Y_{22}$ and $Y_{23}$ denote the expressions associated with the first, second and third finite sums in the above equation respectively so that
\begin{equation}\label{y2123}
Y_2=Y_{21}+Y_{22}+Y_{23}.
\end{equation}
Now using \eqref{heine} repeatedly, it can be see that 
\begin{align}\label{y2123eval}
Y_{21}&=\bigg(1-\frac{q}{\b}\bigg)\frac{(g, e, \g, t, \frac{fq}{\b}, \frac{hq}{\b})_{\infty}}{(h, f, \d, \frac{\a t}{\b}, \frac{e q}{\b}, \frac{gq}{\b})_{\infty}}\sum_{p=0}^{\infty}\frac{(\frac{\d}{\g})_p(\frac{\a t}{\b})_p\g^p}{(t)_p(q)_p}\sum_{k=0}^{\infty}\frac{(\frac{\d q^p}{\g})_k(\frac{\g q}{\b})^k}{(q^{p+1})_k}{}_{3}\phi_{2}\bigg(\begin{matrix}\frac{\a q}{b},& \frac{e q}{\b},&\frac{gq}{\b}\\
&\frac{fq}{\b},&\frac{hq}{\b}\end{matrix}\, ;q, tq^{p+k}\bigg),\nonumber\\
Y_{22}&=\bigg(1-\frac{q}{\b}\bigg)\frac{(g, e, \g, t, \frac{hq}{\b})_{\infty}}{(h, f, \d, \frac{\a t}{\b}, \frac{gq}{\b})_{\infty}}\nonumber\\
&\quad\quad\times\sum_{p=1}^{\infty}\frac{(\frac{f}{e})_p(\frac{\a t}{\b})_pe^p}{(t)_p(q)_p}\sum_{k=0}^{\infty}\frac{(\frac{\d}{\g})_k(\frac{\a tq^p}{\b})_k\g^k}{(q)_k(tq^{p})_k}\sum_{j=0}^{\infty}\frac{(\frac{fq^p}{e})_j(\frac{e q}{\b})^j}{(q^{p+1})_j}{}_{2}\phi_{1}\bigg(\begin{matrix}\frac{\a q}{b},&\frac{gq}{\b}\\
&\frac{hq}{\b}\end{matrix}\, ;q, tq^{p+k+j}\bigg),\nonumber\\
Y_{23}&=\bigg(1-\frac{q}{\b}\bigg)\frac{(g,e, \g)_{\infty}}{(h, f, \d)_{\infty}}\sum_{p=1}^{\infty}\frac{(\frac{h}{g})_pg^p}{(q)_p}\sum_{j=0}^{\infty}\frac{(\frac{f}{e})_je^j}{(q)_j}\sum_{k=0}^{\infty}\frac{(\frac{\d}{\g})_k\g^k}{(q)_k}\sum_{m=0}^{\infty}\frac{(\frac{hq^p}{g})_m(tq^{p+k+j})_m}{(q^{p+1})_m(\frac{\a tq^{p+k+j}}{\b})_{m+1}}\left(\frac{gq}{\b}\right)^{m}.
\end{align}
Finally from \eqref{S}, \eqref{xeval}, \eqref{y12}, \eqref{y1}, \eqref{y2123}, and \eqref{y2123eval}, we arrive at \eqref{extandagarfur}.

\begin{proof}[Theorem \textup{\ref{newsigma2}}][]
Let $a=-z$ and $b=1$ in \eqref{4varrt} to obtain
\begin{equation*}
\rho_4(1,-z,c,d)=\rho_{4}(-z,1,c,d)-\frac{(d,c,-cd/z,-zq,-z^{-1},q)_{\infty}}{(d/z,-d,c/z,-c,zq,-q)_{\infty}}.
\end{equation*}
Divide both sides by $(1-z^{-1})$ and let $z\to 1$. Observe that using Lemma \ref{4heine}, the resulting right side is of the form $0/0$; hence employing L'Hopital's rule, we see that
\begin{align}\label{bbt}
&\sum_{n=0}^{\infty}\frac{(-cd)_n(1-cdq^{2n})q^{n(n+1)/2}}{(-q)_n(1-cq^n)(1-dq^n)}\nonumber\\
&=\e\left(2\sum_{n=0}^{\infty}\frac{(d,c,-cd/z)_n}{(zq)_n(-c, -d)_{n+1}}(1+cdq^{2n})z^nq^{n(n+1)/2}-\frac{(d,c,-cd/z,-zq,-z^{-1},q)_{\infty}}{(d/z,-d,c/z,-c,zq,-q)_{\infty}}\right)
\end{align}
The big task now is to transform the first series on the right side before applying the $\e$-operator. Note that
\begin{align}\label{bt}
&2\sum_{n=0}^{\infty}\frac{(d,c,-cd/z)_n}{(zq)_n(-c, -d)_{n+1}}(1+cdq^{2n})z^nq^{n(n+1)/2}\nonumber\\
&=\frac{2}{(1+c)(1+d)}\lim_{\tau\to 0}\sum_{n=0}^{\infty}\frac{(-\frac{q}{\tau},d,c,-cd/z)_n}{(\tau, zq,-cq, -dq)_{n}}(1+cdq^{2n})(\tau z)^n\nonumber\\
&=\frac{2}{(1+c)(1+d)}\left\{\lim_{\tau\to 0}\sum_{n=0}^{\infty}\frac{(-\frac{q}{\tau},d,c,-cd/z)_n}{(\tau, zq,-cq, -dq)_{n}}(\tau z)^n+cd\lim_{\tau\to 0}\sum_{n=0}^{\infty}\frac{(-\frac{q}{\tau},d,c,-cd/z)_n}{(\tau, zq,-cq, -dq)_{n}}(\tau zq^2)^n\right\}\nonumber\\
&=:\frac{2}{(1+c)(1+d)}(L_1+L_2).
\end{align}
To evaluate $L_1$, let $\a=-q/\tau, \b=zq, \g=c, \d=-cq, e=d, f=-dq, g=-cd/z, h=\tau$ and $t=\tau z$ in Theorem \ref{extandagar9}. This results in
{\allowdisplaybreaks\begin{align}\label{liml1}
L_1&=\frac{(-\frac{cd}{z}, d, c, q, -\frac{cq}{z}, -\frac{dq}{z}, -zq, -\frac{1}{z})_{\infty}}{(-dq, -cq, zq, -1, -q, \frac{c}{z}, \frac{d}{z}, -\frac{cd}{z^2})_{\infty}}\sum_{n=0}^{\infty}\frac{(\frac{c}{z},\frac{d}{z},-\frac{cd}{z^2})_n}{(-\frac{cq}{z},-\frac{dq}{z},q)_n}q^{\frac{n(n+1)}{2}}\nonumber\\
&\quad+\frac{(-\frac{cd}{z}, d, c, -\frac{cq}{z}, -\frac{dq}{z})_{\infty}}{(-dq, -cq, -1, \frac{c}{z}, \frac{d}{z}, -\frac{cd}{z^2})_{\infty}}\left(1-\frac{1}{z}\right)\sum_{n=0}^{\infty}\frac{(\frac{c}{z},\frac{d}{z},-\frac{cd}{z^2})_n}{(-\frac{cq}{z},-\frac{dq}{z},q)_n}q^{\frac{n(n+1)}{2}}\sum_{j=1}^{\infty}\frac{q^{j(j+1)/2}z^{-j}}{(-q)_j}\nonumber\\
&\quad+\frac{(-\frac{cd}{z}, d, c, -\frac{dq}{z})_{\infty}}{(-dq, -cq, -1, \frac{d}{z}, -\frac{cd}{z^2})_{\infty}}\left(1-\frac{1}{z}\right)\sum_{p=0}^{\infty}\frac{(-q)_p(-1)_pc^p}{(q)_p}\sum_{k=0}^{\infty}\frac{(-q^{p+1})_k(c/z)^k}{(q^{p+1})_k}\nonumber\\
&\quad\quad\times\sum_{n=0}^{\infty}\frac{(-\frac{cd}{z^2},\frac{d}{z})_n}{(-\frac{dq}{z},q)_n}q^{\frac{n(n+1)}{2}+(p+k)n}\nonumber\\
&\quad+\frac{(-\frac{cd}{z}, d, c)_{\infty}}{(-dq, -cq, -1, -\frac{cd}{z^2})_{\infty}}\left(1-\frac{1}{z}\right)\sum_{p=1}^{\infty}\frac{(-q)_p(-1)_pd^p}{(q)_p}\sum_{k=0}^{\infty}\frac{(-q)_k(-q^p)_kc^k}{(q)_k}\nonumber\\
&\quad\quad\times\sum_{j=0}^{\infty}\frac{(-q^{p+1})_j(d/z)^j}{(q^{p+1})_j}\sum_{n=0}^{\infty}\frac{(-\frac{cd}{z^2})_n}{(q)_n}q^{\frac{n(n+1)}{2}+(p+k+j)n}\nonumber\\
&\quad+\frac{(-\frac{cd}{z}, d, c)_{\infty}}{(-dq, -cq)_{\infty}}\left(1-\frac{1}{z}\right)\sum_{p=1}^{\infty}\frac{(-\frac{cd}{z})^p}{(q)_p}\sum_{j=0}^{\infty}\frac{(-q)_jd^j}{(q)_j}\sum_{k=0}^{\infty}\frac{(-q)_kc^k}{(q)_k}\sum_{m=0}^{\infty}\frac{(-cd/z^2)^m}{(q^{p+1})_m(-q^{p+k+j})_{m+1}}.
\end{align}}
Now let $\a=-q/\tau, \b=zq, \g=c, \d=-cq, e=d, f=-dq, g=-cd/z, h=\tau$ and $t=\tau zq^2$ in Theorem \ref{extandagar9}. This gives
\begin{align}\label{liml2}
L_2&=\frac{(-\frac{cd}{z}, d, c, q, -\frac{cq}{z}, -\frac{dq}{z}, -zq^3, -\frac{1}{zq^2})_{\infty}}{(-dq, -cq, zq, -\frac{1}{q^2}, -q^3, \frac{c}{z}, \frac{d}{z}, -\frac{cd}{z^2})_{\infty}}\sum_{n=0}^{\infty}\frac{(\frac{c}{z},\frac{d}{z},-\frac{cd}{z^2})_n}{(-\frac{cq}{z},-\frac{dq}{z},q)_n}q^{\frac{n(n+1)}{2}+2n}\nonumber\\
&\quad+\frac{(-\frac{cd}{z}, d, c, -\frac{cq}{z}, -\frac{dq}{z})_{\infty}}{(-dq, -cq, -q^2, \frac{c}{z}, \frac{d}{z}, -\frac{cd}{z^2})_{\infty}}\left(1-\frac{1}{z}\right)\sum_{n=0}^{\infty}\frac{(\frac{c}{z},\frac{d}{z},-\frac{cd}{z^2})_n}{(-\frac{cq}{z},-\frac{dq}{z},q)_n}q^{\frac{n(n+1)}{2}+2n}\sum_{j=1}^{\infty}\frac{q^{j(j-3)/2}z^{-j}}{(-1/q)_j}\nonumber\\
&\quad+\frac{(-\frac{cd}{z}, d, c, -\frac{dq}{z})_{\infty}}{(-dq, -cq, -q^2, \frac{d}{z}, -\frac{cd}{z^2})_{\infty}}\left(1-\frac{1}{z}\right)\sum_{p=0}^{\infty}\frac{(-q)_p(-q^2)_pc^p}{(q)_p}\sum_{k=0}^{\infty}\frac{(-q^{p+1})_k(c/z)^k}{(q^{p+1})_k}\nonumber\\
&\quad\quad\times\sum_{n=0}^{\infty}\frac{(-\frac{cd}{z^2},\frac{d}{z})_n}{(-\frac{dq}{z},q)_n}q^{\frac{n(n+1)}{2}+(p+k+2)n}\nonumber\\
&\quad+\frac{(-\frac{cd}{z}, d, c)_{\infty}}{(-dq, -cq, -q^2, -\frac{cd}{z^2})_{\infty}}\left(1-\frac{1}{z}\right)\sum_{p=1}^{\infty}\frac{(-q)_p(-q^2)_pd^p}{(q)_p}\sum_{k=0}^{\infty}\frac{(-q)_k(-q^{p+2})_kc^k}{(q)_k}\nonumber\\
&\quad\quad\times\sum_{j=0}^{\infty}\frac{(-q^{p+1})_j(d/z)^j}{(q^{p+1})_j}\sum_{n=0}^{\infty}\frac{(-\frac{cd}{z^2})_n}{(q)_n}q^{\frac{n(n+1)}{2}+(p+k+j+2)n}\nonumber\\
&\quad+\frac{(-\frac{cd}{z}, d, c)_{\infty}}{(-dq, -cq)_{\infty}}\left(1-\frac{1}{z}\right)\sum_{p=1}^{\infty}\frac{(-\frac{cd}{z})^p}{(q)_p}\sum_{j=0}^{\infty}\frac{(-q)_jd^j}{(q)_j}\sum_{k=0}^{\infty}\frac{(-q)_kc^k}{(q)_k}\sum_{m=0}^{\infty}\frac{(-cd/z^2)^m}{(q^{p+1})_m(-q^{p+k+j+2})_{m+1}}.
\end{align}
Substituting \eqref{liml1} and \eqref{liml2} in \eqref{bt}, we find that
{\allowdisplaybreaks\begin{align*}
&2\sum_{n=0}^{\infty}\frac{(d,c,-cd/z)_n}{(zq)_n(-c, -d)_{n+1}}(1+cdq^{2n})z^nq^{n(n+1)/2}\nonumber\\
&=\frac{2}{(1+c)(1+d)}\bigg\{\frac{(-\frac{cd}{z}, d, c, q, -\frac{cq}{z}, -\frac{dq}{z}, -zq, -\frac{1}{z})_{\infty}}{(-dq, -cq, zq, -1, -q, \frac{c}{z}, \frac{d}{z}, -\frac{cd}{z^2})_{\infty}}\sum_{n=0}^{\infty}\frac{(\frac{c}{z},\frac{d}{z},-\frac{cd}{z^2})_n}{(-\frac{cq}{z},-\frac{dq}{z},q)_n}\left(1+\frac{cdq^{2n}}{z^2}\right)q^{\frac{n(n+1)}{2}}\nonumber\\
&\quad+\frac{(-\frac{cd}{z}, d, c, -\frac{cq}{z}, -\frac{dq}{z})_{\infty}}{(-dq, -cq, -1, \frac{c}{z}, \frac{d}{z}, -\frac{cd}{z^2})_{\infty}}\left(1-\frac{1}{z}\right)\sum_{n=0}^{\infty}\frac{(\frac{c}{z},\frac{d}{z},-\frac{cd}{z^2})_n}{(-\frac{cq}{z},-\frac{dq}{z},q)_n}q^{\frac{n(n+1)}{2}}\nonumber\\
&\quad\quad\times\left\{\sum_{j=1}^{\infty}\frac{q^{j(j+1)/2}z^{-j}}{(-q)_j}+2(1+q)cdq^{2n}\sum_{j=1}^{\infty}\frac{q^{j(j-3)/2}z^{-j}}{(-1/q)_j}\right\}\nonumber\\
&\quad+\frac{(-\frac{cd}{z}, d, c, -\frac{dq}{z})_{\infty}}{(-dq, -cq, -1, \frac{d}{z}, -\frac{cd}{z^2})_{\infty}}\left(1-\frac{1}{z}\right)\sum_{p=0}^{\infty}\frac{(-q)_p(-1)_pc^p}{(q)_p}\sum_{k=0}^{\infty}\frac{(-q^{p+1})_k(c/z)^k}{(q^{p+1})_k}\nonumber\\
&\quad\quad\times\sum_{n=0}^{\infty}\frac{(-\frac{cd}{z^2},\frac{d}{z})_n}{(-\frac{dq}{z},q)_n}q^{\frac{n(n+1)}{2}+(p+k)n}\left(1+cdq^{2n}(1+q^p)(1+q^{p+1})\right)\nonumber\\
&\quad+\frac{(-\frac{cd}{z}, d, c)_{\infty}}{(-dq, -cq, -1, -\frac{cd}{z^2})_{\infty}}\left(1-\frac{1}{z}\right)\sum_{p=1}^{\infty}\frac{(-q)_p(-1)_pd^p}{(q)_p}\sum_{k=0}^{\infty}\frac{(-q)_k(-q^{p})_kc^k}{(q)_k}\nonumber\\
&\quad\quad\times\sum_{j=0}^{\infty}\frac{(-q^{p+1})_j(d/z)^j}{(q^{p+1})_j}\sum_{n=0}^{\infty}\frac{(-\frac{cd}{z^2})_n}{(q)_n}q^{\frac{n(n+1)}{2}+(p+k+j)n}\left(1+cdq^{2n}(1+q^{p+k})(1+q^{p+k+1})\right)\nonumber\\
&\quad+\frac{(-\frac{cd}{z}, d, c)_{\infty}}{(-dq, -cq)_{\infty}}\left(1-\frac{1}{z}\right)\sum_{p=1}^{\infty}\frac{(-\frac{cd}{z})^p}{(q)_p}\sum_{j=0}^{\infty}\frac{(-q)_jd^j}{(q)_j}\sum_{k=0}^{\infty}\frac{(-q)_kc^k}{(q)_k}\nonumber\\
&\quad\quad\times\sum_{m=0}^{\infty}\frac{(-cd/z^2)^m}{(q^{p+1})_m(-q^{p+k+j})_{m+1}}\left(1+cd\frac{(1+q^{p+k+j})(1+q^{p+k+j+1})}{(1+q^{p+k+j+m+1})(1+q^{p+k+j+m+2})}\right)\bigg\}.
\end{align*}}
Since
\begin{align*}
&\sum_{n=0}^{\infty}\frac{(\frac{c}{z},\frac{d}{z},-\frac{cd}{z^2})_n}{(-\frac{cq}{z},-\frac{dq}{z},q)_n}\left(1+\frac{cdq^{2n}}{z^2}\right)q^{\frac{n(n+1)}{2}}=\frac{\left(-\frac{cd}{z^2},-q\right)_{\infty}}{\left(-\frac{cq}{z},-\frac{dq}{z}\right)_{\infty}},
\end{align*}
by Lemma \ref{4heine}, and
\begin{align*}
&\sum_{j=1}^{\infty}\frac{q^{j(j+1)/2}z^{-j}}{(-q)_j}+2(1+q)cdq^{2n}\sum_{j=1}^{\infty}\frac{q^{j(j-3)/2}z^{-j}}{(-1/q)_j}\nonumber\\
&=\left(1+\frac{cdq^{2n}}{z^2}\right)\sum_{j=1}^{\infty}\frac{q^{j(j+1)/2}z^{-j}}{(-q)_j}+\frac{cdq^{2n}}{z^2}(1+2z),
\end{align*}
we see that
{\allowdisplaybreaks\begin{align}\label{bts2}
&2\sum_{n=0}^{\infty}\frac{(d,c,-cd/z)_n}{(zq)_n(-c, -d)_{n+1}}(1+cdq^{2n})z^nq^{n(n+1)/2}\nonumber\\
&=\frac{(-\frac{cd}{z}, d, c, q, -zq, -\frac{1}{z})_{\infty}}{(-d, -c, zq, -q, \frac{c}{z}, \frac{d}{z})_{\infty}}+\frac{(-\frac{cd}{z}, d, c)_{\infty}}{(-d, -c, \frac{c}{z}, \frac{d}{z})_{\infty}}\left(1-\frac{1}{z}\right)\sum_{j=1}^{\infty}\frac{q^{j(j+1)/2}z^{-j}}{(-q)_j}\nonumber\\
&\quad+\frac{cd(1+2z)}{z^2}\left(1-\frac{1}{z}\right)\frac{(-\frac{cd}{z}, d, c, -\frac{cq}{z}, -\frac{dq}{z})_{\infty}}{(-d, -c, -q, \frac{c}{z}, \frac{d}{z}, -\frac{cd}{z^2})_{\infty}}\sum_{n=0}^{\infty}\frac{(\frac{c}{z},\frac{d}{z},-\frac{cd}{z^2})_n}{(-\frac{cq}{z},-\frac{dq}{z},q)_n}q^{\frac{n(n+1)}{2}+2n}\nonumber\\
&\quad+\frac{(-\frac{cd}{z}, d, c, -\frac{dq}{z})_{\infty}}{(-d, -c, -q, \frac{d}{z}, -\frac{cd}{z^2})_{\infty}}\left(1-\frac{1}{z}\right)\sum_{p=0}^{\infty}\frac{(-q)_p(-1)_pc^p}{(q)_p}\sum_{k=0}^{\infty}\frac{(-q^{p+1})_k(c/z)^k}{(q^{p+1})_k}\nonumber\\
&\quad\quad\times\sum_{n=0}^{\infty}\frac{(-\frac{cd}{z^2},\frac{d}{z})_n}{(-\frac{dq}{z},q)_n}q^{\frac{n(n+1)}{2}+(p+k)n}\left(1+cdq^{2n}(1+q^p)(1+q^{p+1})\right)\nonumber\\
&\quad+\frac{(-\frac{cd}{z}, d, c)_{\infty}}{(-d, -c, -q, -\frac{cd}{z^2})_{\infty}}\left(1-\frac{1}{z}\right)\sum_{p=1}^{\infty}\frac{(-q)_p(-1)_pd^p}{(q)_p}\sum_{k=0}^{\infty}\frac{(-q)_k(-q^{p})_kc^k}{(q)_k}\nonumber\\
&\quad\quad\times\sum_{j=0}^{\infty}\frac{(-q^{p+1})_j(d/z)^j}{(q^{p+1})_j}\sum_{n=0}^{\infty}\frac{(-\frac{cd}{z^2})_n}{(q)_n}q^{\frac{n(n+1)}{2}+(p+k+j)n}\left(1+cdq^{2n}(1+q^{p+k})(1+q^{p+k+1})\right)\nonumber\\
&\quad+2\frac{(-\frac{cd}{z}, d, c)_{\infty}}{(-d, -c)_{\infty}}\left(1-\frac{1}{z}\right)\sum_{p=1}^{\infty}\frac{(-\frac{cd}{z})^p}{(q)_p}\sum_{j=0}^{\infty}\frac{(-q)_jd^j}{(q)_j}\sum_{k=0}^{\infty}\frac{(-q)_kc^k}{(q)_k}\nonumber\\
&\quad\quad\times\sum_{m=0}^{\infty}\frac{(-cd/z^2)^m}{(q^{p+1})_m(-q^{p+k+j})_{m+1}}\left(1+cd\frac{(1+q^{p+k+j})(1+q^{p+k+j+1})}{(1+q^{p+k+j+m+1})(1+q^{p+k+j+m+2})}\right).
\end{align}}
Finally we substitute \eqref{bts2} in \eqref{bbt} and then apply the $\e$-operator to obtain \eqref{idsigma2} after simplification. This completes the proof.
\end{proof}
\textbf{Remark 2.} Let $S(c, d,q):=(-cd,-q)_{\infty}/(-d,-c)_{\infty}$ and denote the left-hand side of \eqref{bbt} by $\sigma(c, d, q)$. If we explicitly evaluate $\e\left(\frac{(-cd/z,d,c,q,-zq,-z^{-1},q)_{\infty}}{(-d,-c,zq,-q,c/z,d/z)_{\infty}}\right)$ using the Jacobi triple product identity, then \eqref{bbt} leads us to
\begin{align*}
\sigma(c, d, q)&=\e\left(2\sum_{n=0}^{\infty}\frac{(d,c,-cd/z)_n(1+cdq^{2n})}{(zq)_n(-c, -d)_{n+1}}z^nq^{\frac{n(n+1)}{2}}\right)+S(c, d, q)\left(1+2\sum_{n=0}^{\infty}\frac{cdq^{n}}{1+cdq^{n}}\right)\nonumber\\
&\quad+2S(c, d,q)\left(\sum_{n=0}^{\infty}\frac{dq^n}{1-dq^n}+\sum_{n=0}^{\infty}\frac{cq^n}{1-cq^n}-\sum_{n=1}^{\infty}\frac{q^n}{1-q^n}\right).
\end{align*}
This gives a two-variable generalization of \cite[Equation (3.5)]{andrews1986}, as can be seen with the help of \eqref{rsi1}.

\subsection{Theorem \ref{newsigma1} as a special case of Theorem \ref{newsigma2}}\label{spcor}
In this subsection, we deduce Theorem \ref{newsigma1} from Theorem \ref{newsigma2}. 

Let $d=0$ in \eqref{idsigma2}. Then
\begin{align*}
\sigma(q)=(-c)_{\infty}\sum_{n=0}^{\infty}\frac{q^{n(n+1)/2}}{(-q)_n(1-cq^n)}+\Lambda(c, 0, q).
\end{align*}
Thus we need only show that 
\begin{equation*}
\Lambda(c, 0, q)=-2\sum_{m,n=0}^{\infty}\frac{(-q)_m}{(q)_m(q)_n}\frac{(-1)^nq^{n(n+1)/2}c^{m+n+1}}{(1-q^{n+m+1})}.
\end{equation*}
To that end, note that the two quadruple sums in \eqref{Lambda} just collapse to $0$ so that
\begin{equation*}
\Lambda(c, 0, q)=1-\frac{(c)_{\infty}}{(-q)_{\infty}}\sum_{p=0}^{\infty}\frac{(-q)_p(-1)_pc^p}{(q)_p}\sum_{k=0}^{\infty}\frac{(-q^{p+1})_kc^k}{(q^{p+1})_k}\sum_{n=0}^{\infty}\frac{q^{\frac{n(n+1)}{2}+(p+k)n}}{(q)_n}.
\end{equation*}
 Now use Euler's formula \cite[p.~19, Corollary 2.2]{gea} 
\begin{equation*}
\sum_{n=0}^{\infty}\frac{w^nq^{\frac{n(n-1)}{2}}}{(q)_n}=(-w)_{\infty},\hspace{3mm} |w|<\infty, 
\end{equation*}
to evaluate the sum over $n$ in the above triple sum so that
{\allowdisplaybreaks\begin{align*}
\Lambda(c, 0, q)&=1-\frac{(c)_{\infty}}{(-q)_{\infty}}\sum_{p=0}^{\infty}\frac{(-q)_p(-1)_pc^p}{(q)_p}\sum_{k=0}^{\infty}\frac{(-q^{p+1})_k}{(q^{p+1})_k}c^k(-q^{p+k+1})_{\infty}\nonumber\\
&=1-(c)_{\infty}\sum_{k=0}^{\infty}\frac{c^k}{(q)_k}-(c)_{\infty}\sum_{p=1}^{\infty}\frac{(-1)_p}{(q)_p}c^p\sum_{k=0}^{\infty}\frac{c^k}{(q^{p+1})_k}\nonumber\\
&=-2\sum_{p,n=0}^{\infty}\frac{(-q)_p}{(q)_p(q)_n}\frac{(-1)^nq^{n(n+1)/2}c^{p+n+1}}{(1-q^{p+n+1})},
\end{align*}}
where we used \eqref{qbin} to evaluate the first sum in the penultimate expression, and \eqref{1551fine} to evaluate the sum over $k$ in the double sum over $p$ and $k$. 
This completes the proof.

\section{Concluding Remarks}\label{cr}
The two series, namely 
\begin{align*}
\sum_{n=0}^{\infty}\frac{q^{n(n+1)/2}}{(-q)_n(1-cq^n)},\hspace{2mm}\text{and}\hspace{2mm}\sum_{n=0}^{\infty}\frac{(-cd)_n(1-cdq^{2n})q^{n(n+1)/2}}{(-q)_n(1-cq^n)(1-dq^n)},
\end{align*}
occurring in theorems \ref{newsigma1} and \ref{newsigma2}, are respectively one- and two-variable generalizations of $\sigma(q)$. It may be fruitful to see which properties of $\sigma(q)$ hold for these generalizations as well. Also it may be important to see if there are any partition-theoretic interpretations of some of the results proved here. 

As demonstrated in this paper, there are a number of advantages of using Agarwal's identity \eqref{me} and its generalization \eqref{extandagarfur} for transforming $\rho_{3}(-z,1,c)$ and $\rho_4(-z,1,c,d)$ respectively. First of all, the infinite product expressions occurring in the specializations of the three- and the four-variable reciprocity theorems used in our proofs get cancelled completely. Secondly, these identities contain ${}_{2}\phi_{1}\bigg(\begin{matrix}q,& q/t\\
&q\b/(\a t)\end{matrix}\, ;q, q/\a\bigg)$, which is what leads to $\sigma(q)$ after appropriately specializing the parameters. Thirdly, all of the other expressions in these identities contain the factor $1-q/\b$, or after letting $\b=zq$, the factor $1-1/z$, which is extremely useful since all other factors involving $z$ in an expression which contains $1-1/z$ get annihilated when we differentiate them with respect to $z$ and then let $z\to 1$.

There are further generalizations of Ramanujan's reciprocity theorem, namely, the five-variable generalization due to Chu and Zhang \cite{chuzhang} and Ma \cite[Theorem 1.3]{masix}, the six-variable generalization given in \cite{masix}, the seven-variable generalization due to Wei, Wang and Yan \cite[Theorem 3, Corollary 4]{wwy} and a different one by Liu \cite[Theorem 1.9]{zgl2016}, and finally the multiparameter generalization in \cite[Theorem 7]{wwy}. While there is no reason a priori why the ideas used in this paper  may not be applicable to obtain further identities of the type we have established,  the complexity of the computations involved in the proof of Theorem \ref{newsigma2} suggests that the computations involved while applying the reciprocity theorems in more than four variables may be quite unwieldy. 

That being said, we believe that one can further simplify $\Lambda(c, d, q)$ to the effect of at least having the $1$ on the right-hand side of \eqref{Lambda} cancelled. First of all, note that the second expression in \eqref{Lambda} admits further simplification, namely,
{\allowdisplaybreaks\begin{align*}
&\frac{3cd(-cq,-dq)_{\infty}}{(-cd,-q)_{\infty}}\sum_{n=0}^{\infty}\frac{(c,d,-cd)_n}{(-cq,-dq,q)_n}q^{\frac{n(n+1)}{2}+2n}\nonumber\\
&=\frac{3(-cq,-dq)_{\infty}}{(-cd,-q)_{\infty}}\left\{\sum_{n=0}^{\infty}\frac{(c,d,-cd)_n}{(-cq,-dq,q)_n}(1+cdq^{2n})q^{\frac{n(n+1)}{2}}-\sum_{n=0}^{\infty}\frac{(c,d,-cd)_n}{(-cq,-dq,q)_n}q^{\frac{n(n+1)}{2}}\right\}\nonumber\\
&=3-3\frac{(-cq,-dq)_{\infty}}{(-cd,-q)_{\infty}}\sum_{n=0}^{\infty}\frac{(c,d,-cd)_n}{(-cq,-dq,q)_n}q^{\frac{n(n+1)}{2}},
\end{align*}}
by another application of Lemma \ref{4heine}. However, we are unable to evaluate the last series or the other multi-sums occurring in \eqref{Lambda}. Note that the following special case of the $q$-analog of Kummer's theorem \cite[Equation (1.7)]{gea55}, known as Lebesgue's identity, is well-known \cite[Corollary 2.7]{gea}:
\begin{equation*}
\sum_{n=0}^{\infty}\frac{(a)_n}{(q)_n}q^{n(n+1)/2}=(-q)_{\infty}(aq;q^2)_{\infty}.
\end{equation*}
This prompts us to ask if there are higher-level analogues of Lebesgue's identity which could possibly be used to represent the sum $\displaystyle\sum_{n=0}^{\infty}\frac{(c,d,-cd)_n}{(-cq,-dq,q)_n}q^{\frac{n(n+1)}{2}}$. A generalization of Lebesgue's identity in a different direction is given by Alladi \cite[Equation (2.10), Section 4]{alladileg}.

The finite forms of Ramanujan's reciprocity theorem and its three- and four-variable generalizations are obtained in \cite{sommur}. It may be of interest to see if something along the lines of \eqref{rsi1}, \eqref{rsi2}, and Theorems \ref{newsigma1} and \ref{newsigma2} could be obtained starting with these finite analogues.

\begin{center}
\textbf{Acknowledgements}
\end{center}
This work was done when the first author was visiting Indian Institute of Technology Gandhinagar as a summer intern under the Summer Research Internship Program-2016 (SRIP-2016) program. He would like to thank the institute for its hospitality and support.

\end{document}